\chardef\bslash=`\\ 
\def\verbatim{\interlinepenalty\@M \@verbatim
  \leftskip\@totalleftmargin\advance\leftskip2pc
  \frenchspacing\@vobeyspaces \@xverbatim}
\newtheorem{thm}{Theorem}[section]
\newtheorem{lem}[thm]{Lemma}
\newtheorem{ex}[thm]{Example}
\newtheorem{que}[thm]{Question}
\newtheorem{prob}[thm]{Problem}
\begin{document}


\title
{When is $X\times Y$ homeomorphic  to $X\times_l Y$?}
\author{Raushan  Buzyakova}
\email{Raushan\_Buzyakova@yahoo.com}

\keywords{ linearly ordered topological space, lexicographical product, homeomorphism, ordinal}
\subjclass{ 06B30,  54F05, 06A05, 54A10}


\begin{abstract}{
We identify a class of linearly ordered topological  spaces $X$ that may satisfy the property that
$X\times X$ is homeomorphic to $X\times_l X$  or can be embedded into a linearly ordered space  with the stated property. We justify the conjectures by partial results.
}
\end{abstract}

\maketitle
\markboth{R. Buzyakova}{When is $X\times Y$ homeomorphic to $X\times_l Y$?}
{ }

\section{Questions}\label{S:questions}
\par\bigskip
In this paper we provide a discussion that justifies our interest in the question of the title. We also identify more specific questions that may lead to  affirmative resolutions. We back up our curiosity by some partial results and examples. The main result of this work is Theorem \ref{thm:main}. To proceed further let us agree on some terminology. A linear order will also be called an order. An order $<$ on $X$ is compatible with the topology of $X$, if the topology  induced by $<$ is equal to the topology of $X$. A linearly ordered topological space (abbreviated as LOTS) is a pair $\langle X, <\rangle$ of a topological space $X$ and a topology-compatible order $<$ on $X$. A topological space $X$ is orderable if its topology can be induced by some order on $X$. When we consider the lexicographical product $X\times_l Y$ of two LOTS $X$ and $Y$, we first take the lexicographical products of the ordered sets $X$ and $Y$ and then induce the topology as determined by the lexicographical order on $X\times_l Y$. For the purpose of readability we will assume an informal style when describing some folklore-type structures or arguments.

The operations of Cartesian product and lexicographical product produce (more often than not) completely different structures. The former results in a visually "more voluminous" structure, while the latter keeps "visual linearity" but introduces "stretches". In rare cases, however, both operations produce the same results from a topological point of view.  For example, $\mathbb Q\times \mathbb Q$ is homeomorphic to $\mathbb Q\times_l\mathbb Q$. Also, $S\times S$ is homeomorphic to $S\times_l S$, where $S=\{\pm 1/n:n=1,2,...\}$. Note that $S$ is homeomorphic to the space $\mathbb N$ of natural numbers. However, $\mathbb N\times \mathbb N$ is not homeomorphic to $\mathbb N\times_l \mathbb N$. Indeed, the former is discrete while the latter has non-isolated points such as $\langle 2,1\rangle , \langle 3,1\rangle$, etc.
Following this discussion, it is not hard to see that given any discrete space $D$, it is possible to find a topology-compatible   order $\prec$ on $D$ such that $D^*=\langle D, \prec\rangle$ is discrete and $D^*\times D^*$  is homeomorphic to $D^*\times_l D^*$. Our discussion prompts the following general problem.

\par\bigskip\noindent
\begin{prob}
What conditions on $X$ guarantee that there exists a topology-compatible order $\prec$ on $X$ such that  $X\times X$ is homeomorphic to $\langle X, \prec\rangle\times_l \langle X, \prec\rangle$?
\end{prob}

\par\bigskip\noindent
Note that homogeneity is not a necessary condition as follows from the following folklore fact.

\par\bigskip\noindent
\begin{ex}\label{ex:convseq} (Folklore)
$(\omega + 1)\times_l (\omega + 1)$ is homeomorphic to $(\omega + 1) \times (\omega + 1)$.
\end{ex}
\begin{proof}
First observe that $Y = [(\omega + 1)\times_l (\omega +1)]\setminus \{\langle \omega, n\rangle: n= 1,2,..\}$
is homeomorphic to $(\omega + 1)\times_l (\omega +1)$. We will, therefore, provide a homeomorphism between $X= (\omega + 1)\times (\omega +1)$ and $Y$. We define our homeomorphism  in three stages as follows:
\begin{description}
	\item[\it Stage 1] For every $n\in \omega$, fix a  bijection  $f_n$ between $\{\langle n, k\rangle \in X: k = n, ..., \omega\}\subset (\omega +1)\times (\omega +1)$ and $\{\langle 2n, m\rangle\in Y: m\in \omega+1\}\subset (\omega +1)\times_l (\omega +1)$. Such a homeomorphism exists since both subspaces are homeomorphic to $\omega +1$.

	\item[\it Stage 2]  For every $n\in \omega$, fix a bijection  $g_n$  between $\{\langle k,n\rangle \in X: k = n+1, ..., \omega\}$ and $\{\langle 2n+1, m\rangle\in Y: m\in \omega+1\}$.

	\item[\it Stage 3] Define the promised homomorphism from $X$ to $Y$ as follows:

$$
f(x) = \left\{
        \begin{array}{ll}
             f_n(x) & x \in  \{\langle n, k\rangle \in X: k = n, ..., \omega\}\\
             g_n(x)& x\in \{\langle k,n\rangle \in X: k = n+1, ..., \omega\} \\
	  \langle \omega, 0\rangle & x=\langle \omega,\omega\rangle
        \end{array}
    \right.
$$
Visually, $f$ maps the $n$-th vertical at or above the diagonal in $(\omega +1)\times (\omega +1)$ onto the $(2n)$-th copy of $(\omega +1)$ in $(\omega + 1)\times_l (\omega +1 )$. Also $f$ maps the $n$-th horizontal under the diagonal in $(\omega +1)\times (\omega +1)$ onto the $(2n+1)$-st copy of $(\omega +1)$ in $(\omega + 1)\times_l (\omega +1 )$. Finally, $f$ maps the upper right corner point of the Cartesian product to $\langle \omega ,0\rangle$ of the lexicographical product, which is the only point that is the limit of a sequence of non-isolated points.
\end{description}
Clearly, $f$ is a bijection. Let us show that $f$ and $f^{-1}$ are continuous.    Since the domains and images of $f_n$ and $g_n$ are clopen in the respective superspaces, it remains to show that $f$ is continuous at $\langle \omega,\omega\rangle$ and $f^{-1}$ is continuous at $\langle \omega,0\rangle$. For this let $U_n=[n,\omega]\times [n,\omega]$. Then $f(U)= \{\langle (a,b)\in Y: a\geq 2n\}$, which is an open neighborhood of $\langle \omega, 0\rangle $ in $Y$. We have $\{U_n\}_n$ is a basis at $\langle \omega,\omega\rangle $ in $X$ and $\{f(U_n)\}_n$ is a basis at $\langle \omega , 0\rangle$ in $Y$. Since $Y$ is bijective, $f^{-1}$ is continuous at $\langle \omega,0\rangle$.  We proved that $(\omega + 1)\times (\omega +1)$ is homeomorphic to $Y$, and therefore, to $(\omega +1)\times_l(\omega +1)$.
\end{proof}

\par\bigskip\noindent
Even though $(\omega + 1)$ is not homogeneous, it is homogeneous at all non-isolated points (since there is only such point). But even this property is not necessary for the two types of products to be homeomorphic. A similar argument can be used to verify the presence of the studied  phenomenon  in the following example.

\par\bigskip\noindent
\begin{ex}\label{ex:seqofsequences}
$X\times_l X$ is homeomorphic to $X \times X$, where $X=(\omega\times_l\omega ) + 1$ .
\end{ex}

\par\bigskip\noindent
The limit points in this example have different natures. The leftmost point cannot be carried by a homeomorphism to any internal limit point. 
We omit the proof of the statement of Example \ref{ex:seqofsequences} since we will prove a more general one later (Lemma \ref{lem:main}).
Following Example \ref{ex:convseq} and the fact that any discrete space has the property under discussion, one may wonder if any linearly ordered space with a single non-isolated point has the property. The following example shows that the answer is negative and opens another direction for our study.

\par\bigskip\noindent
\begin{ex}
Let $X= (\omega +1 )\oplus D$, where $D$ is an $\omega_1$-sized discrete space. Then the following hold:
\begin{enumerate}
	\item $X$ is orderable.
	\item  $X\times X$ is not homogeneous to $\langle X, \prec\rangle \times_l \langle X, \prec\rangle$ for any topology-compatible order $\prec$ on $X$.
\end{enumerate}
\end{ex}
\begin{proof} To see why $X$ is orderable, first observe that we can think of $X$ as the subspace of $\omega_1$ that contains only all  isolated ordinals   of $\omega_1$ and the ordinal $\omega$.  To order $X$, simply reverse the order of every sequence in form $\{\alpha +1, \alpha +2, ...,\}$, for each limit ordinal greater than $\omega$.

To prove part (2), fix an arbitrary topology-compatible order $\prec$ on $X$. 
The space $X_\prec=\langle X,\prec \rangle$ has at least one of extreme points or neither. Let us consider all possibilities.
\begin{description}
	\item[\rm Case ({\it $X_\prec$ has neither minimum nor maximum})] Then $\{x\}\times_l X_\prec$ is clopen in $X_\prec\times_l X_\prec$ for each $x$. Therefore, 
$X_\prec\times_l X_\prec$ is the free sum of $\omega_1$ many topological copies of $X$. Hence, 
$X_\prec \times_l X_\prec$ is not homeomorphic to $X\times X$.
	\item[\rm Case ({\it $ X_\prec$ has minimum but not maximum})]  Assume first that 
$X_\prec$ has a strictly increasing sequence $\{a_n\}_n$ converging to $\omega$. Then any neighborhood of 
$\langle \omega, \min X_\prec\rangle$ contains $\{a_n\}\times_l X_\prec$.  Therefore,
any neighborhood of 
$\langle \omega, \min X_\prec\rangle$ has size $\omega_1$, while no point in $X\times X$ has such populous base neighborhoods.

We now assume that $X_\prec$ has no strictly increasing sequences converging to $\omega$. This and the absence of a maximum imply that $X_\prec \times X_\prec$ does not have a topological copy of $\omega\times_l\omega + 1$. However,  $X\times X$ does, which is  $\langle \omega,\omega\rangle$. In other words, the second derived set of the lexicographical product is empty but $(X\times X)'' = \{\langle\omega,\omega\rangle\}$.
	\item[\rm Case ({\it $\langle X,\prec \rangle$ has maximum but not minimum})] Similar to Case 2.
	\item[\rm Case ({\it $\langle X,\prec \rangle$ has both maximum and minimum})] Similar to the first part of Case 2.
\end{description}
Since we have exhausted all cases, the roof is complete.
\end{proof}

\par\bigskip
It is known (see, for example \cite{Buz}) that given a subspace $X$ of an ordinal, the square of  $X$ is homeomorphic to a subspace of a linearly ordered space if and only if $X$ has no stationary subsets and is character homogeneous at all non-isolated points.  This statement and the preceding discussion lead to the following question.

\par\bigskip\noindent
\begin{que}\label{que:main}
Let $X$ be a subset of an ordinal, character homogeneous at non-isolated points, and have no stationary subsets. Can $X$ be embedded in a linearly ordered space $L$ for which  $L\times L$ and $L\times_l L$ are homeomorphic?
\end{que}

\par\bigskip
Note that even though spaces in examples \ref{ex:convseq} and \ref{ex:seqofsequences} are not homogeneous, each point has a basis of mutually homeomorphic neighborhoods. This observation prompts the following question.

\par\bigskip\noindent
\begin{que}\label{que:main2}
Let $X$ be a subspace of an ordinal and every point of $X$ has a basis of mutually homeomorphic neighborhoods.  Can $X$ be embedded in a linearly ordered space $L$ for which  $L\times L$ and $L\times_l L$ are homeomorphic?
\end{que}

\par\bigskip
In the next section we will justify the discussed questions by proving a statement that generalizes Example \ref{ex:convseq}. Namely, we will show that if $X$ is a subspace of an ordinal and is homogeneous on its derived set $X'$, then $X$ is embeddable in a linearly ordered space $L$ that has homeomorphic  Cartesian and lexicographical products (Theorem \ref{thm:main}). To prove this we will first identify a special class of spaces for which the two types of products are homeomorphic (Lemma \ref{lem:main}). The structure of these spaces is similar to that of the space in Example \ref{ex:seqofsequences}. We, therefore, generalize Example \ref{ex:seqofsequences} too.

 In notations and terminology we will follow \cite{Eng}. If $X$ is a linearly-ordered set,  by $[a,b]_X$ we denote the closed interval in $X$. If it is clear that the interval is considered in $X$ but not in some larger ordered set, we simply write $[a,b]$. The same concerns other types of intervals. By $X'$ we will denote the set of all non-isolated points of $X$, that is, {\it the derived set of $X$}. We also say that
$X$ is {\it homogeneous on its subset} $A$ if  for every $x,y\in A$ there exists a homeomorphism  $f:X\to X$  such that $f(x)=y$ and $f(y)=x$.

\par\bigskip
\section{Partial Results}\label{S:partialresults}

\par\bigskip
 In what follows, by $\mathcal L$ we denote {\it the class of all subspaces of ordinals that are homeomorphic on their derived sets.}

\par\bigskip
To prove our main statement (Theorem \ref {thm:main}), we start with two  technical lemmas about the key properties of the members of $\mathcal L$ that will be used in further arguments.
\par\bigskip\noindent
\begin{lem}\label{lem:homogeneousnbhds}
Let $X\in \mathcal L$. Then, for any $x\in X'$ there exists $\alpha_x<x$ such that $x$ is the single non-isolated point of $[\alpha_x,x]_X$.
\end{lem}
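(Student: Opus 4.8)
The plan is to argue by contradiction, exploiting the fact that $X$ is a subspace of an ordinal together with the homogeneity of $X$ on $X'$. Fix $x\in X'$ and suppose, toward a contradiction, that no $\alpha_x<x$ works; that is, for every $\alpha<x$ with $\alpha\in X$, the interval $[\alpha,x]_X$ contains a non-isolated point other than $x$. I would first observe that, since $x\in X'$ and $X$ sits inside an ordinal, $x$ is a limit point of $X$ from the left (a point of an ordinal has all its smaller neighbors ``below'' it, so non-isolation forces approachability from below), so $X\cap[0,x)$ is cofinal in $x$ and we may speak of intervals $[\alpha,x]_X$ with $\alpha$ arbitrarily close to $x$.

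Next, using the failing hypothesis, I would construct a strictly increasing sequence $x_0<x_1<\cdots$ of points of $X'$ converging to $x$: having chosen $x_n<x$, pick $\alpha>x_n$ in $X$ (possible by cofinality), and then by hypothesis $[\alpha,x]_X$ has a non-isolated point $x_{n+1}\neq x$; since $x_{n+1}\ge\alpha>x_n$ and $x_{n+1}\in[\alpha,x]_X\subseteq(x_n,x]$, we may also ensure $x_{n+1}<x$ because $x$ itself, while non-isolated, is the only candidate we must avoid — if the only non-isolated point of $[\alpha,x]_X$ were $x$, the hypothesis would be satisfied at $\alpha$, contradiction. Passing to a subsequence if needed, $\{x_n\}$ is a strictly increasing sequence in $X'$ with $\sup x_n=x$, i.e. $x$ is a limit of non-isolated points. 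The key point is now that $x$ is qualitatively different from each $x_n$: $x$ is a limit of non-isolated points, whereas I claim infinitely many of the $x_n$ have a neighborhood in $X$ containing no other non-isolated point of that neighborhood — but wait, that is exactly what we cannot assume. Instead, the cleaner route is: $x$ has character... no. Let me restructure.

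The real dividing invariant should be the \emph{Cantor--Bendixson rank}. I would use that $X$, as a subspace of an ordinal, is scattered, so every point has a well-defined CB rank. If for every $\alpha<x$ the interval $[\alpha,x]_X$ contains a non-isolated point $\ne x$, then $x$ is a limit of points of $X'$ from the left, hence $\mathrm{rank}(x)\ge 2$, and moreover we can iterate: in fact $x$ would be a limit of points of rank $\ge 1$ cofinally, forcing $\mathrm{rank}(x)>\sup$ of a cofinal-in-$x$ set of ranks of the approximating non-isolated points. The upshot I want is a point $y\in X'$ with $\mathrm{rank}(y)<\mathrm{rank}(x)$ — and such a $y$ exists because $X'$ is nonempty and scattered, so it has a point of minimal rank among... hmm, but minimal rank among points of $X'$ is just $1$, and we need $y$ with a neighborhood whose only non-isolated point is $y$, i.e. $y$ of CB-rank exactly $1$ which always has such a neighborhood. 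Then no homeomorphism $f:X\to X$ can send $x$ to $y$ and $y$ to $x$, because homeomorphisms preserve CB rank and $\mathrm{rank}(x)\ge 2>1=\mathrm{rank}(y)$. This contradicts homogeneity of $X$ on $X'$.

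\textbf{Summary of steps.} (1) Recall $X$ scattered (subspace of an ordinal), so CB rank is defined and homeomorphism-invariant; a point of rank $1$ has a neighborhood in which it is the only non-isolated point. (2) Note any $x\in X'$ is approached from the left. (3) Suppose the lemma fails at $x$; deduce $x$ is a left-limit of points of $X'$, hence $\mathrm{rank}(x)\ge 2$. (4) Pick any $y\in X'$ of rank $1$ (exists since $X'\ne\varnothing$ and is scattered, its isolated points are exactly the rank-$1$ points of $X$). (5) Homogeneity on $X'$ gives $f:X\to X$ with $f(x)=y$, contradicting rank-invariance.

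\textbf{Main obstacle.} The delicate point is step (3): extracting from ``every $[\alpha,x]_X$ contains a non-isolated point $\ne x$'' the clean conclusion that $\mathrm{rank}(x)\ge 2$ — one must check that these extra non-isolated points accumulate \emph{at} $x$ (not merely lie below some fixed bound), which follows by choosing $\alpha$ cofinally close to $x$, and that they are genuinely in $X'$ in the subspace sense. A secondary subtlety is making sure a rank-$1$ point actually exists in $X'$; this is where $X\in\mathcal L$ (in particular $X'\ne\varnothing$ in the nontrivial case, and scatteredness) is used. If $X'=\varnothing$ the lemma is vacuous.
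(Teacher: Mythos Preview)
Your argument is correct, but the paper's proof is far more direct. The paper observes that it suffices, by homogeneity on $X'$, to exhibit a \emph{single} point of $X'$ for which the conclusion holds, and then simply takes $z=\min X'$ (which exists because $X$ is well-ordered) with $\alpha_z=\min X$: any other non-isolated point of $[\min X,z]_X$ would lie strictly below $z$, contradicting minimality. That is the entire proof.

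Your route reaches the same destination via Cantor--Bendixson rank: you locate a rank-$1$ point $y\in X'$ (i.e.\ an isolated point of $X'$) and derive a contradiction from a hypothetical rank-$\ge 2$ point $x$ using rank-invariance under homeomorphisms. This is sound, and in fact $\min X'$ is precisely such a rank-$1$ point, so the two arguments pick the same witness in disguise. What your version buys is that it would work in any scattered space homogeneous on its derived set, not just subspaces of ordinals; what it costs is the overhead of CB rank, the contradiction framing, and the digressions in your write-up (the abandoned sequence construction, the ``wait'' and ``no, let me restructure'' passages) that should be excised. The paper exploits the well-ordering to bypass all of that in two lines.
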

\begin{proof} By homogeneity of $X$ on $X'$ it suffices to show that the conclusion holds for some element of $X'$.
We may assume that $X'$ is not empty. Then
$z=\min X'$ is defined. Then $\alpha_z = \min X$ is as desired for $z$. 
\end{proof}

\par\bigskip\noindent
\begin{lem}\label{lem:freesum}
Let $X\in \mathcal L$.  Then, $X$ can be written as 
$\left ( \oplus_{x\in X'}I_x\right ) \oplus D$ so that the following hold:
\begin{enumerate}
	\item  $D$ is  clopen and discrete,
	\item  $I_x$ and $I_y$  are  homeomorphic for any $x,y\in X'$,
	\item  $x$ is the only non-isolated point of $I_x$ for each $x\in X'$.
\end{enumerate}
\end{lem}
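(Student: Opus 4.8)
The plan is to use Lemma \ref{lem:homogeneousnbhds} as the engine and build the decomposition by a transfinite exhaustion of $X$. First I would fix, for each $x\in X'$, the ordinal $\alpha_x<x$ supplied by Lemma \ref{lem:homogeneousnbhds}, so that $[\alpha_x,x]_X$ has $x$ as its only non-isolated point. The naive idea ``take $I_x=[\alpha_x,x]_X$'' fails because these intervals overlap and need not cover $X$; so the real work is to carve $X$ into clopen pieces, one containing each $x\in X'$ and nothing else from $X'$, plus a leftover discrete set $D$.

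The key step is a transfinite recursion indexed by $X'$ in its induced well-order. Enumerate $X'=\{x_\xi:\xi<\lambda\}$ increasingly. Having chosen clopen, pairwise disjoint $I_{x_\eta}$ for $\eta<\xi$, each with $x_\eta$ as its unique non-isolated point, I would set $R_\xi=X\setminus\bigcup_{\eta<\xi}I_{x_\eta}$, note $x_\xi=\min(R_\xi\cap X')$, and let $\beta_\xi=\max\{\alpha_{x_\xi},\ \sup_{\eta<\xi}(\text{right endpoint used below }x_\xi)\}$ truncated to lie in $R_\xi$; then take $I_{x_\xi}$ to be the clopen interval of $R_\xi$ of the form $[\beta_\xi,\gamma_\xi]_{R_\xi}$ where $\gamma_\xi$ is chosen just below $x_{\xi+1}$ (or the top of $X$) so that $I_{x_\xi}$ still contains no non-isolated point other than $x_\xi$. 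Since $x_\xi$ has a clopen neighbourhood in $X$ with only $x_\xi$ non-isolated (a tail of $[\alpha_{x_\xi},x_\xi]_X$ together with a right-tail coming from $x_\xi$'s isolated successors, using that $X$ sits in an ordinal so points just above a limit are isolated), such a choice is possible; one shrinks on the left past previously used material and extends on the right as far as allowed. After the recursion, $D=X\setminus\bigcup_{\xi<\lambda}I_{x_\xi}$ contains no point of $X'$, hence is discrete, and it is clopen because its complement is a union of clopen sets and, being a subspace of an ordinal with no accumulation points of its own, it is also closed. This gives conclusions (1) and (3).

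For conclusion (2), the homeomorphism of $X$ on $X'$ is exactly what is needed, but one must be a little careful: a homeomorphism $f\colon X\to X$ with $f(x)=y$ need not send $I_x$ onto $I_y$. However, $f(I_x)$ is then a clopen neighbourhood of $y$ whose only non-isolated point is $y$, and likewise $f^{-1}(I_y)$ is such a neighbourhood of $x$. Using that any two clopen subspaces of an ordinal each having a single, shared non-isolated point are homeomorphic to each other when suitably comparable — more concretely, that $I_x$, $f(I_x)$, $I_y$, $f^{-1}(I_y)$ can be compared by a back-and-forth between $f$ and $f^{-1}$ — one concludes $I_x\cong I_y$. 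Alternatively, and more cleanly, I would first prove the auxiliary fact that for $X\in\mathcal L$ all the sets $[\alpha_x,x]_X$ (hence all clopen neighbourhoods of points of $X'$ witnessing Lemma \ref{lem:homogeneousnbhds}) are mutually homeomorphic, by transporting via $f$ and absorbing the discrepancy into the isolated tails (a convergent sequence plus finitely or countably many isolated points is homeomorphic to the sequence itself). Then make the recursive construction choose each $I_{x_\xi}$ inside such a canonical neighbourhood, so homeomorphism type is preserved by fiat.

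The main obstacle I anticipate is the bookkeeping in the recursion: ensuring simultaneously that the $I_{x_\xi}$ are clopen in $X$ (not merely in the residual set $R_\xi$), that they remain pairwise disjoint, that each omits every other point of $X'$, and that the leftover $D$ is genuinely clopen rather than merely the complement of an open set. The clopenness of $D$ is the subtle point: in a general space the complement of a union of clopen sets need not be open, but here $D\subseteq X\setminus X'$ consists entirely of isolated points of $X$, so $D$ is open; and $D$ is closed because any limit point of $D$ in $X$ would lie in $X'$ and hence in some $I_{x_\xi}$, whose complement is open and contains $D$ — a contradiction once one checks the $I_{x_\xi}$ were chosen to be neighbourhoods of the $x_\xi$ large enough to swallow any nearby isolated points that could otherwise accumulate. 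Handling this ``largeness'' condition uniformly with the disjointness condition is where the argument needs the most care.
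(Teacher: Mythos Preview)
Your transfinite recursion is built on a false premise: the half-open intervals $(\alpha_x,x]_X$ supplied by Lemma~\ref{lem:homogeneousnbhds} are \emph{already} pairwise disjoint, so no exhaustion argument is needed. Indeed, if $x<y$ lie in $X'$ and $(\alpha_x,x]_X\cap(\alpha_y,y]_X\neq\emptyset$, then $\alpha_y<x$; but then $(\alpha_y,x]_X$ is an open neighbourhood of $x$ contained in $(\alpha_y,y]_X$, making $x$ a second non-isolated point of the latter interval and contradicting the choice of $\alpha_y$. (With closed intervals one could at worst share a single endpoint; this is why the paper passes to half-open ones.) The paper therefore skips straight to the answer: for each $x\in X'$ pick $\beta_x\in(\alpha_x,x)$ so that $I_x=(\beta_x,x]_X$ has the \emph{same cardinality as every smaller neighbourhood of $x$}, and set $D=X\setminus\bigcup_{x\in X'}I_x$. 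Disjointness is automatic as above, each $I_x$ is clopen because every point of an ordinal subspace has an immediate successor, and $D$ is clopen and discrete by exactly the argument you give --- so (1) and (3) fall out immediately, with no bookkeeping.

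The minimal-cardinality clause is also the paper's entire mechanism for (2): homogeneity on $X'$ transports the invariant ``minimal neighbourhood cardinality at $x$'' to the same invariant at $y$, so $|I_x|=|I_y|$, and two clopen neighbourhoods of a point of $X'$, each having that point as sole limit and each of this minimal size, are homeomorphic. Your primary construction, by contrast, lets $I_{x_\xi}$ sprawl rightward to some $\gamma_\xi$ just below $x_{\xi+1}$; then $|I_{x_\xi}|$ depends on how many isolated points of $X$ happen to sit between consecutive members of $X'$, and this can vary (think of two clopen copies of $\omega+1$ together with an $\omega_1$-sized clopen discrete block placed, in the ambient ordinal, between them). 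So as written your $I_x$ need not satisfy (2). The ``alternative'' you sketch at the end --- choose each $I_{x_\xi}$ inside a canonical small neighbourhood --- is precisely the paper's one-line move, and once you adopt it the recursion, the right-hand endpoints $\gamma_\xi$, and all the attendant worries about clopenness in $X$ versus $R_\xi$ simply evaporate.
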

\begin{proof}
For any $x\in X'$, let $\alpha_x$ be as in Lemma \ref{lem:homogeneousnbhds}. We can find $\beta_x$ between $\alpha_x$ and $x$ such that   $I_x=(\beta_x, x]_X$ has the same cardinality as any smaller neighborhood of $x$.  Then $D=X\setminus \cup \{I_x:x\in X'\}$ is a clopen discrete subset of $X$ and $X=\left ( \oplus_{x\in X'}I_x\right ) \oplus D$ is a desired representation.
\end{proof}

\par\bigskip\noindent
To prove our target statement, first for each infinite cardinal $\gamma$, we identify a   linearly ordered topological space $\langle L_\gamma, \prec\rangle$ for which $\langle L_\gamma, \prec\rangle\times \langle L_\gamma, \prec\rangle$ is homeomorphic to $\langle L_\gamma, \prec\rangle\times_l \langle L_\gamma, \prec\rangle$. Next, we will direct our efforts on the task of embedding the members of $\mathcal L$  into such spaces. 

\par\bigskip\noindent
{\bf Construction  of $\langle L_\gamma, \prec\rangle$ for an infinite cardinal $\gamma$}. 
\par\smallskip\noindent
\underline {\it Definition of $L_\gamma$.} Denote by $\lambda_\gamma$ the ordinal $(\gamma\times_l \gamma ) +1$. Define $L_\gamma$ as the  subspace of $\lambda_\gamma$ that consists of all points $\alpha$ 
that fall into one of the following three categories:
\begin{enumerate}
	\item 	$\alpha=\max \lambda_\gamma$
	\item  $[\alpha_0, \alpha]$ is order-isomorphic to $\gamma + 1$ for some $\alpha_0<\alpha$.
	\item  $\alpha$ is isolated.
\end{enumerate}
{\it Remark.} To help visualize $L_\gamma$, put $I=\{\alpha<\gamma:\alpha \ is \ isolated\}\cup \{\gamma\}$. Then $L_\gamma$ can be thought of as a $\gamma$-long sequence of $\gamma$-many clopen copies of $I$ converging to $\max \lambda_\gamma$. 

\par\medskip\noindent
\underline {\it Definition of $\prec$.} If $\gamma = \omega$, then $L_\gamma = \lambda_\gamma$ and we let $\prec$ be equal to the existing ordering $<$. For $\gamma>\omega$, we will define $\prec$ using a folklore ordering procedure. We first define the order formally and then follow up with a simple demonstration. For each $\alpha\in \lambda_\gamma\setminus L_\gamma$,  put $R_\alpha = \{\alpha + 1, \alpha + 2, ...\}$. By the definition of $L_\gamma$ and the fact that $\gamma>\omega$, we conclude that $R_\alpha$ is a closed subset of  $L_\gamma$ . Define  $\prec_\alpha$  on $R_\alpha$ as follows: $...\alpha + 5\prec \alpha + 3 \prec \alpha + 1 < \alpha + 2 \prec \alpha + 4 ...$. Define $\prec$ as follows:
\begin{enumerate}
	\item $x\prec y$ if   $x,y\in R_\alpha$ and $x\prec_\alpha y$.
	\item $x\prec y$ if $\{x,y\}$ is not a subset of $R_\alpha$ for any $\alpha$ and $x<y$.
\end{enumerate}
Construction of $\langle L_\gamma,\prec\rangle$ is complete.
\begin{flushright}$\square$\end{flushright}

\par\bigskip\noindent
To convince a reader that the above definition is legal without going into painful details,  let us demonstrate  a folklore construction of a topology-compatible order for the space $X=\{-1/n: n=1,2,3,...\} \cup \{5-1/n: n = 1,2,3,...\}$. The space $X$ is not a linearly ordered space but there are many simple topology-compatible orders on $X$. The one that mimics the above construction is defined as follows. First, reverse the order on $\{5-1/n:n=1,2,3...\}$. The resulting set becomes order isomorphic to $\{\pm 1/n:n=1,2,3,..\}$ and is homeomorphic to $X$. This short construction is formalized in the above definition  in which we top every  "missing limit point" by the reversed  sequence "converging to the next missing limit point".

\par\bigskip\noindent
Note that in our definition of $\prec$ for $L_\gamma$ we do not change the order position of limit points of $L_\gamma$, which means that the new order coincides with the natural order when one of the compared elements is in  $L_\gamma'$. In a sense, the new order $\prec$ on $L_\gamma$ is almost indistinguishable from the standard order $<$ if "{\it observed  from far away}".  Also note that if $X$ is  a subspace of an ordinal that is homogeneous on the derived set,  then
by Lemma \ref{lem:freesum}, $X$ can be embedded into $L_\gamma$ for some $\gamma$.   Let us record these observations for future reference.

\par\bigskip\noindent
\begin{lem}\label{lem:embed} The following hold:
\begin{enumerate}
	\item Every $X\in \mathcal L$ embeds in $\L_\gamma$ for some cardinal $\gamma$.
	\item If $x\in L_\gamma'$, $y\in L_\gamma$, and $x<y$, then $x\prec y$.
	\item If $x\in L_\gamma'$, $y\in L_\gamma$, and $y<x$, then $y\prec x$.
\end{enumerate}
\end{lem}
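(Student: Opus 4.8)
The plan is to verify each of the three items of Lemma~\ref{lem:embed} separately, since items (2) and (3) are essentially immediate from the definition of $\prec$ while item (1) collects the earlier lemmas.

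First I would handle item (1). By Lemma~\ref{lem:freesum}, any $X\in\mathcal L$ can be written as $\left(\oplus_{x\in X'}I_x\right)\oplus D$ where $D$ is clopen discrete, each $I_x$ has $x$ as its only non-isolated point, and all the $I_x$ are mutually homeomorphic. Each $I_x$ is a subspace of an ordinal with a single non-isolated point, so it embeds (as a clopen subset, after possibly enlarging) into a space order-isomorphic to $\gamma+1$ where $\gamma$ is a cardinal bounding $|I_x|$ and $|D|$; choose $\gamma$ large enough to work simultaneously for all $x\in X'$ and for $D$. The discrete part $D$ embeds into the isolated points of one more copy of $I$. Matching the number of summands $|X'|$ against the $\gamma$-many copies of $I$ available in $L_\gamma$ (again enlarging $\gamma$ if necessary so that $|X'|\le\gamma$), one gets a clopen embedding of $X$ into $L_\gamma$. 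The point here is simply that $L_\gamma$ was \emph{designed} to be a $\gamma$-long free sum of $\gamma$-many copies of $I$ together with a top point, which is exactly the shape of the canonical decomposition of a member of $\mathcal L$; so this is bookkeeping rather than a genuine difficulty.

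For items (2) and (3), fix $x\in L_\gamma'$ and $y\in L_\gamma$. If $\gamma=\omega$ then $\prec$ equals $<$ and there is nothing to prove, so assume $\gamma>\omega$. Recall that the only elements whose $<$-position was altered in passing to $\prec$ are those lying in some $R_\alpha=\{\alpha+1,\alpha+2,\dots\}$ for $\alpha\in\lambda_\gamma\setminus L_\gamma$, and within such an $R_\alpha$ the reshuffling is confined to that single block. The key observation, already noted in the paragraph preceding the lemma, is that every non-isolated point of $L_\gamma$ avoids all the sets $R_\alpha$: indeed, a point of $R_\alpha$ is either isolated or would have $\alpha\in L_\gamma$, and for $\gamma>\omega$ one checks from the three defining categories of $L_\gamma$ that $R_\alpha$ consists entirely of isolated points of $L_\gamma$. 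Hence if $x\in L_\gamma'$ then $x\notin R_\alpha$ for every $\alpha$, so for any $y$ the pair $\{x,y\}$ is not contained in any $R_\alpha$, and clause (2) of the definition of $\prec$ applies: $x\prec y\iff x<y$. This gives both $x<y\Rightarrow x\prec y$ and $y<x\Rightarrow y\prec x$ at once.

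The only place that needs care — and the step I would flag as the main obstacle, modest as it is — is confirming that $R_\alpha$ really is a closed discrete subset of $L_\gamma$ lying entirely outside $L_\gamma'$ when $\gamma>\omega$; this is where the hypothesis $\gamma>\omega$ is used, since for $\gamma=\omega$ the block $\{\alpha+1,\alpha+2,\dots\}$ would accumulate at $\alpha+\omega$. Once that structural fact is in hand, items (2) and (3) are one-line consequences, and item (1) is an assembly of Lemmas~\ref{lem:homogeneousnbhds} and~\ref{lem:freesum} together with the explicit description of $L_\gamma$ in the Remark. I would therefore organize the write-up as: (i) the $R_\alpha$-avoidance fact; (ii) deduce (2) and (3); (iii) deduce (1) from the free-sum decomposition by cardinality matching.
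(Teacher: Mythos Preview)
Your approach matches the paper's, which does not give a formal proof but presents the lemma as a record of the observations in the paragraph immediately preceding it: items (2) and (3) hold because $\prec$ only reshuffles points inside the blocks $R_\alpha$ and every limit point of $L_\gamma$ lies outside all such blocks, while item (1) is read off from the free-sum decomposition of Lemma~\ref{lem:freesum} together with the description of $L_\gamma$ in the Remark. Your verification that $L_\gamma'\cap R_\alpha=\varnothing$ (and hence that clause (2) of the definition of $\prec$ applies) is precisely the content of the paper's sentence ``we do not change the order position of limit points of $L_\gamma$.''

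One imprecision worth flagging in your treatment of (1): ``choose $\gamma$ large enough'' is not quite the right condition. The unique non-isolated point of each $I_x$ has some common character $\kappa$ (a regular cardinal, fixed by homogeneity on $X'$), while every non-isolated point of $L_\gamma$ below the top has character $\operatorname{cf}(\gamma)$; an embedding forces these to coincide. Thus if, say, $\kappa=\omega$ but $|X'|=\omega_1$, neither $\gamma=\omega$ (too few copies of $I$) nor $\gamma=\omega_1$ (wrong character) works, whereas $\gamma=\aleph_\omega$ does. The correct bookkeeping is to take any cardinal $\gamma\ge |X|$ with $\operatorname{cf}(\gamma)=\kappa$, which always exists. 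The paper leaves this implicit as well, so once you adjust that sentence your argument is already more explicit than the original.
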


\par\bigskip\noindent
We will often use the facts in this summary lemma without explicit referencing. Our next goal is to show that lexicographical and Cartesian product operations produce topologically equivalent results when applied to an $L_\gamma$. We start by considering the two operations on smaller pieces of $L_\gamma$'s. In the following three statements the arguments will be very similar to each other. For clarity, we will also use similar wording.

\par\bigskip\noindent
\begin{lem}\label{lem:zerogammasquared}
Let $\gamma$ be an infinite cardinal. Then  $[0,\gamma]_{ L_\gamma}\times [0,\gamma]_ {L_\gamma}$ is homeomorphic to $L_\gamma$.
\end{lem}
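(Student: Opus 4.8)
The plan is to unwind what the two sides actually look like as topological spaces and then match them by a stage-wise bijection in the spirit of Example \ref{ex:convseq}. First I would identify $[0,\gamma]_{L_\gamma}$ concretely: by the definition of $L_\gamma$ this interval consists of the isolated ordinals below $\gamma$ together with the single limit point $\gamma$, so (using the Remark) $[0,\gamma]_{L_\gamma}$ is exactly the space $I$, a convergent $\gamma$-sequence together with its limit — a one-point compactification of a discrete set of size $\gamma$. So the left-hand side is $I\times I$: a space whose only non-isolated points are $\{(\gamma,d): d\in I\}\cup\{(d,\gamma):d\in I\}$, i.e.\ two "crossing" convergent sequences sharing the corner point $(\gamma,\gamma)$, which is the unique point of the second derived set. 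I would record that $I\times I$ has exactly $\gamma$ non-isolated points, each with a clopen neighborhood homeomorphic to $I$, except the corner point $(\gamma,\gamma)$, whose neighborhoods $[d,\gamma]\times[d',\gamma]$ contain infinitely many of these non-isolated points; thus $(I\times I)'' = \{(\gamma,\gamma)\}$ and $(I\times I)'$ is a convergent $\gamma$-sequence of isolated-in-$(I\times I)'$ points together with that corner.

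Next I would compute what $L_\gamma$ itself looks like: by the Remark, $L_\gamma$ is a $\gamma$-indexed sequence of $\gamma$-many clopen copies of $I$ converging to the top point $\max\lambda_\gamma$. Hence $L_\gamma$ has $\gamma\cdot\gamma=\gamma$ many non-isolated points; $L_\gamma'$ consists of the "local limit points" (one in each copy of $I$, forming a $\gamma$-sequence in each block, $\gamma$ blocks) together with the top point, and $(L_\gamma)'' = \{\max\lambda_\gamma\}$. So structurally $L_\gamma$ and $I\times I$ agree: both are scattered of Cantor–Bendixson rank $3$ (length $\gamma$ at each of the first two levels), with a single point at the top. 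The homeomorphism is then built in three stages exactly as in Example \ref{ex:convseq}: (Stage 1) enumerate the non-isolated points of $I\times I$ other than the corner as $\{p_\xi:\xi<\gamma\}$ and the copies of $I$ inside $L_\gamma$ as $\{C_\xi:\xi<\gamma\}$, and for each $\xi$ fix a homeomorphism from a fixed clopen copy-of-$I$ neighborhood of $p_\xi$ in $I\times I$ onto $C_\xi$ (both are homeomorphic to $I$); the issue that the non-isolated points of $I\times I$ come in "rows" and "columns" is handled, as in Example \ref{ex:convseq}, by sending the $n$-th vertical at-or-above the diagonal and the $n$-th horizontal strictly-below the diagonal to separate copies of $I$; (Stage 2) map the corner $(\gamma,\gamma)$ to the top point $\max\lambda_\gamma$; (Stage 3) glue.

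Finally I would check continuity. On the clopen pieces it is immediate since each is a homeomorphism between clopen sets. Continuity at the corner point and at the top point is the only real content: a basic neighborhood $[d,\gamma]\times[d',\gamma]$ of $(\gamma,\gamma)$ maps onto the union of all copies $C_\xi$ whose index is sufficiently large, which is precisely a basic (tail) neighborhood of $\max\lambda_\gamma$ in $L_\gamma$, and conversely; since the map is a bijection carrying a neighborhood base at the corner onto a neighborhood base at the top, it and its inverse are continuous there. The main obstacle — and it is a bookkeeping obstacle rather than a conceptual one — is organizing the Stage 1 enumeration so that the "two crossing sequences" of non-isolated points of $I\times I$ are distributed among the $\gamma$ blocks $C_\xi$ in a way that visibly respects the convergence to the corner/top; the diagonal trick of Example \ref{ex:convseq} does this for $\gamma=\omega$, and for general $\gamma$ one simply well-orders each of the two sides in type $\gamma$ and interleaves, which works because $\gamma+\gamma$ and $\gamma$ are order-isomorphic-after-reindexing for cardinals. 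Everything else is routine.
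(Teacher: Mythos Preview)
Your proposal is correct and follows essentially the same route as the paper: identify $[0,\gamma]_{L_\gamma}$ with $I$, split $I\times I$ into verticals at-or-above the diagonal and horizontals strictly below it, send these to separate clopen copies of $I$ inside $L_\gamma$, map the corner $(\gamma,\gamma)$ to $\max\lambda_\gamma$, and verify continuity at that single non-trivial point via square tail-neighborhoods. The paper makes your ``interleave the two $\gamma$-sequences'' step concrete by partitioning the isolated ordinals of $\gamma$ into consecutive pairs $\{a_\alpha,b_\alpha\}$ with $b_\alpha=a_\alpha+1$ and order-preserving indexing, which is exactly the bookkeeping device you allude to; your phrase ``$\gamma+\gamma$ and $\gamma$ are order-isomorphic-after-reindexing'' is imprecise as stated (the ordinals are not isomorphic), but the intended content---a cofinality-preserving interleaving---is what the paper's pairing supplies.
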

\begin{proof}
To prove the statement we will visualize $[0,\gamma]_{L_\gamma}$ and $L_\gamma$ as described in the remark after the definition of $L_\gamma$. Namely,  $[0,\gamma]_{L_\gamma}=I=\{\alpha<\gamma:\alpha\ is\  isolated\}\cup \{\gamma\}$ and $L_\gamma$ is  a $\gamma$-long sequence of $\gamma$-many clopen copies of $I$ converging to $\infty=\max L_\gamma$.  We can write then $L_\gamma=(\oplus \{I_\alpha=I: \alpha<\gamma,\alpha\ is\ isolated\})\cup \{\infty\}$, where every neighborhood of $\infty$ contains all $I_\alpha$'s starting from some moment. Having these visuals in mind we will construct a desired homeomorphism in three stages as follows:
\begin{description}
	\item[\it Stage 1] Partition the set of isolated ordinals of $\gamma$ into pairs $\{\{a_\alpha,b_\alpha\}:\alpha<\gamma,\alpha\ is\ isolated\}$ so that $b_\alpha = a_\alpha + 1$ and indexing  agrees with the natural well-ordering $<$ of the partitioned set. 
	\item[\it Stage 2] Since $\gamma$ is an infinite cardinal, $[\alpha,\gamma]_{L_\gamma}$ is homeomorphic to $I$ for any $\alpha<\gamma$. Therefore, for each isolated $\alpha<\gamma$ we can fix homeomorphisms 
$g_\alpha:\{\alpha\}\times [\alpha,\gamma]_{L_\gamma}\to I_{b_\alpha}$ and $h_\alpha:(\alpha,\gamma]_{L_\gamma}\times \{\alpha\}\to I_{a_\alpha}$. That is, $g_\alpha$ maps the $\alpha$'s vertical of $[0,\gamma]_{ L_\gamma}\times [0,\gamma]_ {L_\gamma}$ at or above the diagonal onto $b_\alpha$'s copy of $I$ in $L_\gamma$ and $h_\alpha$ maps the $\alpha$'s horizontal strictly below the diagonal onto $a_\alpha$'s copy of $I$.
	\item[\it Stage 3] Define a homomorphism $f$ from  $[0,\gamma]_{ L_\gamma}\times [0,\gamma]_ {L_\gamma}$ to $L_\gamma$ as follows:

$$
f(p) = \left\{
        \begin{array}{lll}
             g_\alpha(p) & if & p \in  \{\alpha\}\times [\alpha,\gamma]_{L_\gamma}\\
             h_\alpha(p)& if & p\in (\alpha,\gamma]_{L_\gamma}\times \{\alpha\} \\
	  \infty & if & p=\langle \gamma,\gamma\rangle
        \end{array}
    \right.
$$
\end{description}
The argument similar to that in Example \ref{ex:convseq} shows that $f$ is a homeomorphism.
\end{proof}

\par\bigskip\noindent
\begin{lem}\label{lem:zerogammatimesall}
Let $\gamma$ be an infinite cardinal. Then  $[0,\gamma]_{   L_\gamma}\times  L_\gamma$ is homeomorphic to $[0,\gamma]_{  \langle L_\gamma, \prec\rangle}\times_l  \langle L_\gamma, \prec\rangle$.
\end{lem}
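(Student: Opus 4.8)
The plan is to imitate the explicit, stage-by-stage construction used for Example~\ref{ex:convseq} and Lemma~\ref{lem:zerogammasquared}. As in the remark following the definition of $L_\gamma$, I picture $[0,\gamma]_{L_\gamma}$ as $I=\{\alpha<\gamma:\alpha\ \text{is isolated}\}\cup\{\gamma\}$, a discrete set of size $\gamma$ together with one extra limit point $p=\gamma$, and $L_\gamma$ as $(\oplus_{\xi<\gamma}I_\xi)\cup\{\infty\}$, a $\gamma$-long sequence of clopen copies of $I$ converging to $\infty$. The first thing to record is that, by the construction of $\langle L_\gamma,\prec\rangle$ and Lemma~\ref{lem:embed}(2),(3), the order $\prec$ restricted to $[0,\gamma]_{L_\gamma}$ and to each bundle $I_\xi$ is still topology compatible; hence $[0,\gamma]_{\langle L_\gamma,\prec\rangle}$ is a LOTS homeomorphic to $I$ whose only non-isolated point is its $\prec$-maximum $q$, and $\langle L_\gamma,\prec\rangle$ is a LOTS homeomorphic to $L_\gamma$ with $\prec$-least point $0$ and $\prec$-greatest point $\infty$. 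Thus both products are assembled from the same building blocks.

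Next I would cut both products along the first coordinate. The left side is the free sum $\oplus_{\xi<\gamma}(\{d_\xi\}\times L_\gamma)$ of $\gamma$-many clopen copies of $L_\gamma$ (one over each isolated point $d_\xi$ of $I$), together with the copy $\{p\}\times L_\gamma$; a set is a neighbourhood of $\langle p,y\rangle$ precisely when it contains $U\times W$ for some cofinal $U\subseteq\{d_\xi:\xi<\gamma\}$ and some neighbourhood $W$ of $y$ in $L_\gamma$. For the right side, using that $q$ is the only non-isolated point of $[0,\gamma]_{\langle L_\gamma,\prec\rangle}$, that every other of its points has immediate $\prec$-neighbours or is its $\prec$-minimum, and that $\langle L_\gamma,\prec\rangle$ has both $\prec$-extreme points, one checks that each column $\{c\}\times_l\langle L_\gamma,\prec\rangle$ with $c\neq q$ is clopen and homeomorphic to $L_\gamma$; so the right side is the free sum of $\gamma$-many clopen copies of $L_\gamma$ together with the column over $q$, which is again homeomorphic to $L_\gamma$ as a subspace, but is glued in differently: $\langle q,y\rangle$ is isolated in the product whenever $y$ is isolated in $L_\gamma$, and the $q$-column meets the remaining columns only at the single point $\langle q,0\rangle$ --- in contrast to the left side, where every point of $\{p\}\times L_\gamma$ is non-isolated and is a limit of points coming from cofinally many of the clopen columns simultaneously.

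The matching itself occupies three stages. In Stage~1 one splits the isolated points of $I$ into two cofinal sets and lists the non-$q$ points of $[0,\gamma]_{\langle L_\gamma,\prec\rangle}$. In Stage~2 one fixes homeomorphisms between the various pieces, using freely that all clopen copies of $L_\gamma$ that occur are mutually homeomorphic (the construction together with Lemma~\ref{lem:freesum}), that all copies of $I$ are mutually homeomorphic, that $I\times I\cong L_\gamma$ (Lemma~\ref{lem:zerogammasquared}), and that $I\oplus L_\gamma\cong L_\gamma\cong(\oplus_{\xi<\gamma}I)\cup\{*\}$; moreover, exactly as in the opening step of the proof of Example~\ref{ex:convseq}, one first replaces the right side by a homeomorphic copy from which a suitable $\gamma$-sized set of isolated points has been removed, chosen so that the two ``limit columns'' become glued in matching ways. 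In Stage~3 one assembles these maps into the desired homeomorphism $F$, sending the corner point $\langle p,\infty\rangle$ to the corresponding distinguished point of the right side. Finally, as at the end of the proof of Example~\ref{ex:convseq}, it remains only to verify continuity of $F$ and $F^{-1}$ at the non-isolated points, which reduces to checking that the neighbourhood bases described above correspond.

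The step I expect to be the real obstacle is Stage~2: the two ``limit columns'' genuinely sit inside their products in different ways, so a column-by-column matching cannot work. The work is in choosing the partition of Stage~1, the deletions of Stage~2, and the way one exploits the decomposition $L_\gamma=(\oplus_{\xi<\gamma}I_\xi)\cup\{\infty\}$ together with $I\times I\cong L_\gamma$, so that the left side's ``totally glued'' copy $\{p\}\times L_\gamma$ and the right side's nearly-clopen $q$-column can be made to accumulate onto the clopen columns in the same pattern; once that bookkeeping is arranged, the continuity verification at the end is routine in view of Example~\ref{ex:convseq} and Lemmas~\ref{lem:zerogammasquared}, \ref{lem:freesum}, \ref{lem:embed}.
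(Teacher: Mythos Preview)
You have correctly identified the framework, the building blocks, and even the obstacle, but you stop precisely where the one non-routine idea is needed. Cutting $X=[0,\gamma]_{L_\gamma}\times L_\gamma$ into vertical columns $\{d_\xi\}\times L_\gamma$ cannot be made to work, as you yourself note: the limit column $\{\gamma\}\times L_\gamma$ on the left has \emph{every} point non-isolated, whereas on the right only a single point of the $q$-column is a limit of the other columns. No amount of deleting isolated points or repartitioning the index set repairs this asymmetry so long as the pieces on the left remain vertical columns, because each clopen column on the right can receive at most one point of the left limit column, and there are $\gamma$ many such points versus only one distinguished target.

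The missing idea, which the paper supplies, is to abandon the vertical slicing on the left in favour of a \emph{staircase} decomposition, exactly parallel to the diagonal decomposition in Example~\ref{ex:convseq}. For each isolated $\alpha<\gamma$ one takes the horizontal strip $[\alpha,\gamma]_{L_\gamma}\times I_\alpha$ together with the vertical tail $\{\alpha\}\times V_\alpha$, where $V_\alpha=L_\gamma\setminus\bigcup_{\beta\le\alpha}I_\beta$. The crucial point is that each horizontal strip already contains one point of the troublesome limit column $\{\gamma\}\times L_\gamma$ (namely $\langle\gamma,\max I_\alpha\rangle$) as its unique second-order limit, and by Lemma~\ref{lem:zerogammasquared} that strip is homeomorphic to $I\times I\cong L_\gamma$; likewise each vertical tail is homeomorphic to $L_\gamma$. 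Thus the limit column is not handled as a separate piece at all: it is \emph{absorbed}, one point per strip, into pieces that are themselves homeomorphic to $L_\gamma$. Now one pairs the isolated ordinals as $\{a_\alpha,b_\alpha\}$ with $b_\alpha=a_\alpha+1$, sends the $\alpha$-th horizontal strip onto $\{a_\alpha\}\times_l\langle L_\gamma,\prec\rangle$ and the $\alpha$-th vertical tail onto $\{b_\alpha\}\times_l\langle L_\gamma,\prec\rangle$, and maps the lone remaining corner $\langle\gamma,\infty\rangle$ to the lone remaining limit point on the right. After that the continuity verification is, as you anticipated, routine.
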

\begin{proof}
Denote the spaces in the statement by $X$ and $Y$, respectively. Since $Y$ is homeomorphic to $Z=Y\setminus (\{\gamma\}\times_l [1,\gamma]_{  \langle L_\gamma, \prec\rangle})$, it suffices to  construct an isomorphism from $X$ to $Z$, which we will do next.

When treating  $[0,\gamma]_{  \langle L_\gamma, \prec\rangle}$ and $  \langle L_\gamma, \prec\rangle$ as topological spaces with regard to order,  we will visualize them as described in  Lemma \ref{lem:zerogammasquared}. For convenience, let us copy our notation from Lemma \ref{lem:zerogammasquared} next:

 $$[0,\gamma]_{L_\gamma}=I=\{\alpha<\gamma:\alpha\ is\  isolated\}\cup \{\gamma\}$$
$$L_\gamma=(\oplus \{I_\alpha=I: \alpha<\gamma,\alpha\ is\ isolated\})\cup \{\infty\},$$
where $\infty$ is the maximum element of $L_\gamma$ in either of the two orders.
We are now ready to construct a desired homeomorphism in three stages as follows:
\begin{description}
	\item[\it Stage 1] Partition the set of isolated ordinals of $\gamma$ into pairs $\{\{a_\alpha,b_\alpha\}:\alpha<\gamma,\alpha\ is\ isolated\}$ so that $b_\alpha = a_\alpha + 1$ and indexing  agrees with the natural well-ordering $<$ of the partitioned set. 
	\item[\it Stage 2] By Lemma \ref{lem:zerogammasquared}, for each isolated ordinal $\alpha<\gamma$ there exists a homeomorphism $h_\alpha$ of $[\alpha,\gamma]_{ L_\gamma}\times I_\alpha$ onto $\{a_\alpha\}\times_l \langle L_\gamma, \prec\rangle$. Since $\gamma$ is an infinite cardinal, $V_\alpha= L_\gamma\setminus \bigcup_{\beta\leq \alpha} I_\beta$ is homeomorphic to $L_\gamma$. Hence, we can find a homeomorphism $g_\alpha$ from 
$\{\alpha\}\times V_\alpha$ onto $\{b_\alpha\}\times_l \langle L_\gamma, \prec\rangle$.
	\item[\it Stage 3] Define a homomorphism $f$ from  $X$ to $Z$ as follows:

$$
f(p) = \left\{
        \begin{array}{lll}
             g_\alpha(p) & if & p \in  \{\alpha\}\times V_\alpha\\
             h_\alpha(p)& if & p\in [\alpha,\gamma]_{ L_\gamma}\times I_\alpha\\
	  \langle \gamma, \infty\rangle & if & p=\langle \gamma,\infty\rangle
        \end{array}
    \right.
$$
\end{description}
In words, $f$ maps most of the $\alpha$'s horizontal strip corresponding to $I_\alpha$ onto the $a_\alpha$'s copy of $\langle L_\gamma,\prec\rangle$, most of the $\alpha$'s vertical onto $b_\alpha$'s copy of  $\langle L_\gamma,\prec\rangle$, and the corner point of the Cartesian product to the maximum of $Z$.  The argument similar to that in Example \ref{ex:convseq} shows that $f$ is a homeomorphism.
\end{proof}

\par\bigskip\noindent
We are now ready to prove a generalization of the statement of Example \ref{ex:seqofsequences}.
\par\bigskip\noindent
\begin{lem}\label{lem:main} For every infinite cardinal $\gamma$, the space $L_\gamma\times  L_\gamma$ is homeomorphic to $\langle L_\gamma, \prec\rangle\times_l \langle L_\gamma, \prec\rangle$.
\end{lem}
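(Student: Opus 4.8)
The plan is to build a homeomorphism between $L_\gamma \times L_\gamma$ and $\langle L_\gamma,\prec\rangle \times_l \langle L_\gamma,\prec\rangle$ by decomposing both products along the ``column'' structure of $L_\gamma$ and assembling the homeomorphism from the pieces supplied by Lemmas \ref{lem:zerogammasquared} and \ref{lem:zerogammatimesall}. Write $L_\gamma = (\oplus\{I_\alpha = I : \alpha<\gamma,\ \alpha\ \text{isolated}\}) \cup \{\infty\}$ as in the earlier lemmas, where each $I_\alpha$ is a clopen copy of $I = [0,\gamma]_{L_\gamma}$ and neighborhoods of $\infty$ are cofinal unions of the $I_\alpha$'s. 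Correspondingly, the Cartesian product $L_\gamma\times L_\gamma$ splits as the disjoint union of the clopen ``vertical strips'' $L_\gamma \times I_\alpha$ together with the clopen ``top strip'' $L_\gamma \times \{\infty\}$ (which is a copy of $L_\gamma$), and a small neighborhood trace around the corner point $\langle\infty,\infty\rangle$. On the lexicographic side, $\langle L_\gamma,\prec\rangle \times_l \langle L_\gamma,\prec\rangle$ is, away from the very top, a $\prec$-ordered concatenation of copies $\{x\}\times_l \langle L_\gamma,\prec\rangle$ indexed by $x\in L_\gamma$.

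The key steps, in order, are as follows. First I would discard a ``tail'' of the lexicographic product: exactly as in Lemmas \ref{lem:zerogammatimesall} and Example \ref{ex:convseq}, the space $\langle L_\gamma,\prec\rangle\times_l\langle L_\gamma,\prec\rangle$ is homeomorphic to the subspace $Z$ obtained by deleting $\{\infty\}\times_l [1,\infty]_{\langle L_\gamma,\prec\rangle}$, so it suffices to map $L_\gamma\times L_\gamma$ homeomorphically onto $Z$. Second, I would pick a partition of the isolated ordinals of $\gamma$ into adjacent pairs $\{a_\alpha,b_\alpha\}$ with $b_\alpha = a_\alpha+1$, indexed compatibly with $<$, exactly as in the two preceding lemmas; this is the device that lets one fit ``two strips worth'' of product into the space between consecutive limit levels. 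Third, for each isolated $\alpha<\gamma$ I would invoke Lemma \ref{lem:zerogammatimesall} to get a homeomorphism of the vertical strip $I \times I_\alpha \cong [0,\gamma]_{L_\gamma}\times I_\alpha$ onto $\{a_\alpha\}\times_l\langle L_\gamma,\prec\rangle$, and then use that $V_\alpha = L_\gamma\setminus\bigcup_{\beta\le\alpha}I_\beta$ is homeomorphic to $L_\gamma$ (since $\gamma$ is an infinite cardinal, chopping off an initial segment of fewer than $\gamma$ copies changes nothing) to get a homeomorphism of the ``upper remainder'' $\{\text{rest of column }\alpha\} \cong (\text{suitable clopen piece})\times V_\alpha$ onto $\{b_\alpha\}\times_l\langle L_\gamma,\prec\rangle$ --- here one has to be slightly careful to split each vertical strip $L_\gamma \times I_\alpha$ of the Cartesian product into the ``low part'' handled by the $h_\alpha$ and the ``high part'' handled by the $g_\alpha$, mirroring the two-piece split used in Lemma \ref{lem:zerogammatimesall}. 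Fourth, I would define $f$ piecewise by these $g_\alpha$ and $h_\alpha$ on the strips and send the corner $\langle\infty,\infty\rangle$ to the maximum of $Z$. Finally, continuity of $f$ and $f^{-1}$ at every point except $\langle\infty,\infty\rangle$ (resp.\ its image) is automatic because each strip is clopen on both sides and each $g_\alpha,h_\alpha$ is a homeomorphism; continuity at the corner follows by the same neighborhood-basis matching argument as in Example \ref{ex:convseq}, using that the basic neighborhoods $[\text{level }\alpha,\infty]_{L_\gamma}\times[\text{level }\alpha,\infty]_{L_\gamma}$ of $\langle\infty,\infty\rangle$ map onto tails of $Z$ that form a neighborhood basis of its maximum, and that $f$ is a bijection.

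The main obstacle I anticipate is the bookkeeping in the third and fourth steps: ensuring that the clopen pieces $\{\alpha\}\times V_\alpha$ and $[\alpha,\gamma]_{L_\gamma}\times I_\alpha$ (or whatever precise two-piece split of the vertical strip one adopts) genuinely partition $L_\gamma\times L_\gamma$ minus the corner, that their images under $g_\alpha$ and $h_\alpha$ genuinely partition $Z$ minus its maximum, and that the indexing via the pairs $\{a_\alpha,b_\alpha\}$ respects the convergence structure so that tails on the Cartesian side map to tails on the lexicographic side. None of this is deep --- it is the same pattern executed in Lemmas \ref{lem:zerogammasquared} and \ref{lem:zerogammatimesall}, just one level up --- but it is where an error would hide, so I would state the partition explicitly and then wave at continuity by appeal to Example \ref{ex:convseq}. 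The cardinal arithmetic fact that $L_\gamma$ is ``self-similar'' (removing $<\gamma$ many clopen copies of $I$, or replacing $[0,\gamma]$ by $[\alpha,\gamma]$ for $\alpha<\gamma$, yields a homeomorphic space) is used repeatedly and is the reason the construction closes up; it is already implicit in the earlier lemmas and I would simply cite it.
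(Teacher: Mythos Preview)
Your strategy is exactly the paper's: trim the tail of the lexicographic product, pair up the isolated ordinals as $\{a_\alpha,b_\alpha\}$, build $f$ piecewise from strip homeomorphisms supplied by the earlier lemmas, and check continuity at the corner by the basis argument of Example~\ref{ex:convseq}. The problem is that in your third step the pieces are one size too small on \emph{both} sides, so they fail to partition either product.

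On the target side you send the strips onto $\{a_\alpha\}\times_l\langle L_\gamma,\prec\rangle$ and $\{b_\alpha\}\times_l\langle L_\gamma,\prec\rangle$. These are single columns, each a copy of $L_\gamma$; their union as $\alpha$ ranges over the isolated ordinals of $\gamma$ is only $(I\setminus\{\gamma\})\times_l\langle L_\gamma,\prec\rangle$, i.e.\ essentially the first block of $Z$, not $Z$. At this level the correct targets are the \emph{blocks} $\langle I_{a_\alpha},\prec\rangle\times_l\langle L_\gamma,\prec\rangle$ and $\langle I_{b_\alpha},\prec\rangle\times_l\langle L_\gamma,\prec\rangle$, each homeomorphic to $I\times_l L_\gamma$, and these do tile $Z\setminus\{\max Z\}$. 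Correspondingly, on the domain side your piece $I\times I_\alpha\cong I\times I\cong L_\gamma$ is too small to hit a whole block; what is needed are pieces homeomorphic to $I\times L_\gamma$, so that Lemma~\ref{lem:zerogammatimesall} (not Lemma~\ref{lem:zerogammasquared}) applies. The paper takes $h_\alpha$ on the full horizontal strip $L_\gamma\times I_\alpha$ and $g_\alpha$ on the vertical remainder $I_\alpha\times V_\alpha$ with $V_\alpha=L_\gamma\setminus\bigcup_{\beta\le\alpha}I_\beta$; both are copies of $L_\gamma\times I$, and Lemma~\ref{lem:zerogammatimesall} carries each onto the corresponding block of $Z$. (One should restrict $h_\alpha$ to $\bigl(L_\gamma\setminus\bigcup_{\beta<\alpha}I_\beta\bigr)\times I_\alpha$ to make the domains literally disjoint; this is the ``diagonal'' bookkeeping you allude to.) In short, your instinct that the error would hide in Step~3 was right: the fix is to promote both the domain and codomain pieces from ``column'' to ``block'' so that the partitions actually exhaust $L_\gamma\times L_\gamma$ and $Z$.
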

\begin{proof} Denote by $X$ and $Y$ the two spaces in the statement. As in Lemma \ref{lem:zerogammatimesall}, it suffices to construct a homeomorphism from $X$ to $Z= Y\setminus (\{\infty\}\times_l [1, \infty]_{\langle L_\gamma, \prec\rangle})$
As in the previous two lemmas, we visualize $[0,\gamma]_{L_\gamma}$ and $L_\gamma$ as follows:

$$[0,\gamma]_{L_\gamma}=I=\{\alpha<\gamma:\alpha\ is\  isolated\}\cup \{\gamma\}$$
$$L_\gamma=(\oplus \{I_\alpha=I: \alpha<\gamma,\alpha\ is\ isolated\})\cup \{\infty\},$$
where $\infty$ is the maximum element of $L_\gamma$ in either of the two orders.
We will closely follow our constructions in the previous two lemmas and construct the promised  homeomorphism in three stages as follows:

\begin{description}
	\item[\it Stage 1] Partition the set of isolated ordinals of $\gamma$ into pairs $\{\{a_\alpha,b_\alpha\}:\alpha<\gamma,\alpha\ is\ isolated\}$ so that $b_\alpha = a_\alpha + 1$ and indexing  agrees with the natural well-ordering $<$ of the partitioned set. 
	\item[\it Stage 2] By Lemma \ref{lem:zerogammatimesall}, for each isolated $\alpha<\gamma$, we can fix two homeomorphisms:
$$
h_\alpha:L_\gamma\times I_\alpha\to \langle I_{a_\alpha},\prec\rangle\times_l\langle L_\gamma,\prec\rangle
$$

$$
g_\alpha : I_\alpha\times \left (L_\gamma\setminus \bigcup_{\beta\leq \alpha} I_\beta\right )\to \langle I_{b_\alpha},\prec\rangle\times_l\langle L_\gamma,\prec\rangle
$$
	\item[\it Stage 3]  Define a homomorphism $f$ from  $X$ to $Z$ as follows:

$$
f(p) = \left\{
        \begin{array}{lll}
             g_\alpha(p) & if & p \in   I_\alpha\times \left (L_\gamma\setminus \bigcup_{\beta\leq \alpha} I_\beta\right )\\
             h_\alpha(p)& if & p\in L_\gamma\times I_\alpha\\
	  \langle \infty, \infty\rangle & if & p=\langle \infty,\infty\rangle
        \end{array}
    \right.
$$
\end{description}
In words, $f$ maps most of the $\alpha$'s horizontal strip onto the $a_\alpha$'s copy of $I\times_l L_\gamma$, most of  the $\alpha$'s vertical strip onto  the $b_\alpha$'s copy of $I\times_l L_\gamma$, and the corner point of the Cartesian product to the maximum of $Z$. An argument similar to one of Example \ref{ex:convseq} shows that $f$ is a homeomorphism.
\end{proof}

\par\bigskip\noindent
Lemmas \ref{lem:main} and \ref{lem:embed} imply the following  main statement of our discussion.
\begin{thm}\label{thm:main}
Let $X$ be a subspace of an ordinal that is homogeneous on the derived set. Then $X$ can be embedded into a LOT $Z$  such that $Z\times_l Z$ is homeomorphic to $Z\times Z$.
\end{thm}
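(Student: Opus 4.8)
The plan is to deduce the theorem directly from the two principal lemmas of this section. Since $X$ is a subspace of an ordinal that is homogeneous on its derived set, we have $X\in\mathcal L$ by definition. First I would apply part (1) of Lemma \ref{lem:embed} to fix an infinite cardinal $\gamma$ together with a topological embedding $e\colon X\hookrightarrow L_\gamma$. One should note here that $\gamma$ may be taken infinite without loss of generality: the embedding furnished by Lemma \ref{lem:freesum} only requires $\gamma$ to dominate the cardinalities of $X'$, of the pieces $I_x$, and of the discrete remainder $D$, and for any cardinal $\gamma'\ge\gamma$ the space $L_\gamma$ is (homeomorphic to) a clopen subspace of $L_{\gamma'}$, so one may enlarge $\gamma$ to an infinite cardinal if necessary.

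Next I would set $Z=\langle L_\gamma,\prec\rangle$. By construction $\prec$ is a topology-compatible order on $L_\gamma$, so $Z$ is a LOTS whose underlying topological space is precisely $L_\gamma$; in particular the map $e$ is also a topological embedding of $X$ into $Z$. It remains to check that $Z\times Z$ is homeomorphic to $Z\times_l Z$. But $Z\times Z$ is the same space as $L_\gamma\times L_\gamma$, and $Z\times_l Z$ is by definition $\langle L_\gamma,\prec\rangle\times_l\langle L_\gamma,\prec\rangle$, so this homeomorphism is exactly the content of Lemma \ref{lem:main}. This finishes the argument.

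I do not expect a genuine obstacle at this stage, since all the substantive work has already been carried out: the construction of the order $\prec$ and the verification that it is topology-compatible, the inductive embedding of members of $\mathcal L$ into the spaces $L_\gamma$ (Lemmas \ref{lem:homogeneousnbhds} and \ref{lem:freesum}), and --- most importantly --- the chain of homeomorphisms in Lemmas \ref{lem:zerogammasquared}, \ref{lem:zerogammatimesall} and \ref{lem:main} comparing the Cartesian and lexicographic products of $L_\gamma$ with itself. The theorem is essentially a bookkeeping corollary of these facts; the only points that warrant an explicit sentence are the reduction to infinite $\gamma$ noted above and the observation that passing from $L_\gamma$ to $\langle L_\gamma,\prec\rangle$ does not change the topology, hence does not disturb the embedding of $X$.
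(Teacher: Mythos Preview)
Your proposal is correct and follows exactly the paper's own approach: the paper simply states that Lemmas \ref{lem:embed} and \ref{lem:main} together imply the theorem. Your write-up merely spells out the two-step deduction (embed $X$ into $L_\gamma$, then invoke the homeomorphism $L_\gamma\times L_\gamma\cong\langle L_\gamma,\prec\rangle\times_l\langle L_\gamma,\prec\rangle$) and adds the harmless remarks about taking $\gamma$ infinite and about $\prec$ being topology-compatible.
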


\par\bigskip
In search for candidates with the discussed phenomenon, it is clear that we should immediately eliminate any ordered spaces with stationary subsets. Indeed, the square of such a space is not orderable as follows from a standard generalization of Katetov's example \cite{Kat}. Therefore, by the characterization of hereditary paracompactness for GO-spaces due to Engelking and Lutzer (\cite{BL} or \cite{Lut}), we should consider only hereditary paracompact ordered spaces.  It is clear that  if $X$ has no stationary subset, then  $X^2$ does not have such either. Thus, we need to concentrate on spaces with orderable hereditary paracompact squares. While the Engelking-Lutzer characterization is incredibly handy for  testing an ordered space for hereditary paracompactness, the author is not aware of any criterion for the square of a LOTS to be hereditary paracompact. Is there such a criterion? If not, let us find one!

\par\bigskip

\end{document}